\def\ra{\rightarrow}
\newtheorem{theorem}{THEOREM}[section]
\newtheorem{corollary}[theorem]{Corollary}
\newtheorem{proposition}[theorem]{Proposition}
\newtheorem{lemma}[theorem]{Lemma}
\theoremstyle{definition}
\theoremstyle{remark}
\newtheorem{remark}[theorem]{Remark}
\newcommand\CC{{\mathbb C}}
\newcommand\RR{{\mathbb R}}
\def\Re{\mathop{\rm Re}\nolimits}
\def\blfootnote{\xdef\@thefnmark{}\@footnotetext}
\begin{document}

\title[Rigid Levi degenerate hypersurfaces with vanishing CR-curvature]{Rigid Levi degenerate hypersurfaces
\vspace{0.1cm}\\
with vanishing CR-curvature}\blfootnote{{\bf Mathematics Subject Classification:} 32V05, 32V20, 32W20.} \blfootnote{{\bf Keywords:} CR-curvature, rigid hypersurfaces, the complex Monge-Amp\`ere equation, the Monge equation.}
\author[Isaev]{Alexander Isaev}

\address{Mathematical Sciences Institute\\
Australian National University\\
Canberra, Acton, ACT 2601, Australia}
\email{alexander.isaev@anu.edu.au}

\maketitle

\thispagestyle{empty}

\pagestyle{myheadings}

\begin{abstract} 
We continue our study, initiated in article \cite{I3}, of a class of rigid hypersurfaces in $\CC^3$ that are 2-nondegenerate and uniformly Levi degenerate of rank 1, having zero CR-curvature. We drop the restrictive assumptions of \cite{I3} and give a complete description of the class. Surprisingly, the answer is expressed in terms of solutions of several well-known differential equations, in particular, the equation characterizing conformal metrics with constant negative curvature and a nonlinear $\bar\partial$-equation.
\end{abstract}

\section{Introduction}\label{intro}
\setcounter{equation}{0}

This paper is a continuation of articles \cite{I2, I3}, and we will extensively refer the reader to these papers in what follows. In particular, a brief review of CR-geometric concepts is given in \cite[Section 2]{I2}, and we will make use of those concepts without further reference.

We consider connected $C^{\infty}$-smooth real hypersurfaces in $\CC^n$, with $n\ge 2$. Specifically, we look at \emph{rigid hypersurfaces}, i.e., hypersurfaces given by an equation of the form
\begin{equation}
\Re z_n=F(z_1,\overline{z}_1,\dots, z_{n-1},\overline{z}_{n-1}),\label{rigideqgen}
\end{equation}
where $F$ is a smooth real-valued function defined on a domain in $\CC^{n-1}$. Rigid hypersurfaces have rigid CR-structures in the sense of \cite{BRT} and are invariant under the 1-parameter family of holomorphic transformations
\begin{equation}
(z_1,\dots,z_n)\mapsto (z_1,\dots,z_{n-1}, z_n+it),\,\, t\in\RR.\label{translatisymm}
\end{equation}

We only consider rigid hypersurfaces passing through the origin and will be interested in their germs at 0. Thus, we assume that $F$ in (\ref{rigideqgen}) is defined near the origin and $F(0)=0$, with the domain of $F$ being allowed to shrink if necessary.

We will utilize a natural notion of equivalence for germs of rigid hypersurfaces as introduced in \cite{I3}. Namely, two germs of rigid hypersurfaces at 0 are called \emph{rigidly equivalent} if there exists a map of the form
\begin{equation}
\begin{array}{l}
(z_1,\dots,z_n)\mapsto\Bigl(f_1(z_1,\dots,z_{n-1}),\dots, f_{n-1}(z_1,\dots,z_{n-1}),\\
\vspace{-0.3cm}\\
\hspace{6.1cm}a z_n+f_n(z_1,\dots,z_{n-1})\Bigr),\,\,a\in \RR^*,
\end{array}\label{rigideqmaps}
\end{equation}
nondegenerate at the origin, that transforms one hypersurface germ into the other, where $f_j$ is holomorphic near 0 with $f_j(0)=0$ for $j=1,\dots,n$. 

Our ultimate goal is 
\vspace{-0.7cm}\\

$$
\begin{array}{l}
\hspace{0.2cm}\hbox{$(*)$ to classify, up to rigid equivalence, the germs of rigid hypersurfaces that are}\\
\vspace{-0.3cm}\hspace{0.7cm}\hbox{\emph{CR-flat}, that is, have identically vanishing CR-curvature as explained below.}\\
\end{array}
$$
\vspace{-0.4cm}\\

The problem of describing CR-flat structures that possess certain symmetries (e.g., as in (\ref{translatisymm})) is a natural one but so far has only been addressed for Levi nondegenerate CR-hypersurfaces. In particular, under the assumption of CR-flatness, homogeneous strongly pseudoconvex CR-hypersurfaces have been studied (see \cite{BS}), and tube hypersurfaces in complex space have been extensively investigated and even fully classified for certain signatures of the (nondegenerate) Levi form (see \cite{I1} for a detailed exposition). Compared to the tube case, the case of rigid hypersurfaces is the next situation up in terms of complexity. Although not mentioned explicitly, problem $(*)$ was first looked at in article \cite{S} for real-analytic strongly pseudoconvex hypersurfaces in $\CC^2$. Even in this simplified setup, determining all rigid CR-flat hypersurfaces turned out to be highly nontrivial, with only a number of examples found in \cite{S}. A complete list (not entirely explicit but presented in an acceptable form) of the germs of real-analytic strongly pseudoconvex rigid CR-flat hypersurfaces in $\CC^2$ was only recently obtained in \cite{ES}. 
%We note that, although the notion of rigid equivalence stated above is rather natural, the results of \cite{ES, S} do not make use of this concept.

Our task is much more ambitious as we attempt to relax the Levi nondegeneracy assumption and do not assume real-analyticity. Specifically, in article \cite{I3} we initiated an investigation of problem $(*)$ for a class of Levi degenerate 2-nondegenerate rigid hypersurfaces in $\CC^3$ and obtained a partial description up to rigid equivalence. As part of our considerations, we analyzed CR-curvature for this class.

We will now briefly discuss the concept of CR-curvature. Generally, CR-curvature is defined in a situation when the CR-structures in question are reducible to absolute parallelisms with values in a Lie algebra ${\mathfrak g}$. Indeed, let ${\mathfrak C}$ be a class of CR-manifolds. Then the CR-structures in ${\mathfrak C}$ are said to \emph{reduce to ${\mathfrak g}$-valued absolute parallelisms} if to every $M\in{\mathfrak C}$ one can assign a fiber bundle ${\mathcal P}_M\ra M$ and an absolute parallelism $\omega_M$ on ${\mathcal P}_M$ such that for every $p\in {\mathcal P}_M$ the parallelism establishes an isomorphism between $T_p({\mathcal P}_M)$ and ${\mathfrak g}$, and for all $M_1,M_2\in{\mathfrak C}$ the following holds: 
\vspace{0.1cm}

\noindent (i) every CR-isomorphism $f:M_1\ra M_2$ can be lifted to a diffeomorphism\linebreak $F: {\mathcal P}_{M_{{}_1}}\ra{\mathcal P}_{M_{{}_2}}$ satisfying
\begin{equation}
F^{*}\omega_{M_{{}_2}}=\omega_{M_{{}_1}},\label{eq8}
\end{equation}
and 

\noindent (ii) any diffeomorphism $F: {\mathcal P}_{M_{{}_1}}\ra{\mathcal P}_{M_{{}_2}}$ satisfying (\ref{eq8}) 
is a bundle isomorphism that is a lift of a CR-isomorphism $f:M_1\ra M_2$.
\vspace{-0.3cm}\\

In this situation one considers the ${\mathfrak g}$-valued \emph{CR-curvature form}
$$
\Omega_M:=d\omega_M-\frac{1}{2}\left[\omega_M,\omega_M\right],\label{genformulacurvature}
$$
and the CR-flatness of $M$ is the condition of the identical vanishing of $\Omega_M$ on the bundle ${\mathcal P}_M$.
 
Reduction of CR-structures to absolute parallelisms was initiated by \'E. Cartan  who considered the case of 3-dimensional Levi nondegenerate CR-hyper\-surfaces (see \cite{C}). Since then there have been numerous developments under the Levi nondegeneracy assumption (see \cite[Section 1]{I2} for references). On the other hand, the first result for a reasonably large class of Levi degenerate manifolds is fairly recent. Namely, in article \cite{IZ} we looked at the class ${\mathfrak C}_{2,1}$ of connected 5-dimensional CR-hypersurfaces that are 2-nondegenerate and uniformly Levi degenerate of rank 1 and showed that the CR-structures in ${\mathfrak C}_{2,1}$ reduce to ${\mathfrak{so}}(3,2)$-valued parallelisms. Alternative constructions for ${\mathfrak C}_{2,1}$ were proposed in \cite{MS}, \cite{MP}, \cite{Poc}, \cite{FM} (see also \cite{Por}, \cite{PZ} for reduction in higher-dimensional cases). 

Everywhere in this paper we understand CR-curvature and CR-flatness for the class ${\mathfrak C}_{2,1}$ in the sense of article \cite{IZ}. One of the results of \cite{IZ} states that a manifold $M\in{\mathfrak C}_{2,1}$ is CR-flat if and only if in a neighborhood of its every point $M$ is CR-equivalent to an open subset of the tube hypersurface over the future light cone in $\RR^3$, i.e.,
$$
M_0:\quad(\Re z_1)^2+(\Re z_2)^2-(\Re z_3)^2=0,\,\ \Re z_3>0.\label{light}
$$
In fact, one can show that the germ of $M_0$ at its every point is CR-equivalent to the germ at the origin of the following rigid hypersurface:
$$
\tilde M_0:\quad\Re z_3=\frac{|z_1|^2}{1-|z_2|^2}+\frac{\bar z_2}{2(1-|z_2|^2)}z_1^2+\frac{z_2}{2(1-|z_2|^2)}\bar z_1^2\label{bestknownexample1}
$$
(see \cite{GM} and \cite[Proposition 4.16]{FK}). We thus deduce that for the class ${\mathfrak C}_{2,1}$ problem $(*)$ reduces to the determination, up to rigid equivalence, of all germs of rigid hypersurfaces in ${\mathfrak C}_{2,1}$ that are CR-equivalent to the germ of $\tilde M_0$. 

For real hypersurfaces in $\CC^3$ of the class ${\mathfrak C}_{2,1}$, the condition of local CR-equivalence to $\tilde M_0$ near the origin (i.e., the CR-flatness condition) can be expressed as the simultaneous vanishing of two CR-invariants, called $J$ and $W$, introduced by S.~Pocchiola in \cite{Poc} (cf.~\cite{FM}). These invariants are given explicitly in terms of a graphing function of the hypersurface. The general formulas in \cite{Poc} for $J$ and $W$ are rather lengthy and hard to work with. Luckily, they substantially simplify in the rigid case, and our arguments are based on those shorter formulas.

In article \cite{I3} we initiated the study of a class of solutions of the system
\begin{equation}
\left\{\begin{array}{l}
J=0,\\
\vspace{-0.3cm}\\
W=0
\end{array}\right.\label{condsjw}
\end{equation}  
for rigid hypersurfaces in $\CC^3$ lying in ${\mathfrak C}_{2,1}$. The class is given by conditions (\ref{s1111}) and (\ref{specclass}) stated in the next section. Quite unexpectedly, the study of even this special class of solutions turns out to be rather nontrivial and leads to interesting mathematics. In \cite{I3}, we only determined the solutions in the class satisfying certain additional assumptions as specified in Remark \ref{speccaseprevpaper}. In the present paper, we drop those assumptions and give a complete description, up to rigid equivalence, of all germs of rigid hypersurfaces in $\CC^3$ lying in ${\mathfrak C}_{2,1}$ whose graphing functions satisfy (\ref{s1111}) and (\ref{specclass}). This is our main result; it is stated in Theorem \ref{main}. One consequence of Theorem \ref{main} is the analyticity of solutions (see Corollary \ref{analyticity}). It would be curious to see whether analyticity holds true without any further constraints as discussed in Remark \ref{remanalyticity}.

 It is interesting to note that in the course of our analysis various classical differential equations kept appearing. Indeed, first of all, the complex homogeneous Monge-Amp\`ere equation (\ref{cmplxmongeampere}) arose simply because the Levi form of a hypersurface in ${\mathfrak C}_{2,1}$ is everywhere degenerate. Secondly, the condition $J=0$ mysteriously led to an analogue of the Monge equation (see (\ref{mplxmongeeq})). Thirdly, the equation describing conformal metrics with constant negative curvature unexpectedly came up (see (\ref{confmetrconsnegcurv})). Fourthly, we came across a nonlinear $\bar\partial$-equation, which is a special case of the much-studied equation $\partial u/\partial\bar z=Au+B\bar u+f$ (see (\ref{classnonlindbar})). The description given in Theorem \ref{main} is expressed in terms of solutions of (\ref{confmetrconsnegcurv}), (\ref{classnonlindbar}), as well as those of more elementary equations. We thus see that the geometry of CR-flat manifolds in ${\mathfrak C}_{2,1}$ is rather rich, has surprising connections with classical differential-geometric structures, and so deserves further investigation.

{\bf Acknowledgements.} This work was done while the author was visiting the Steklov Mathematical Institute in Moscow. I wish to thank Stefan Nemirovski, Alexandre Sukhov and Neil Trudinger for useful discussions.

\section{Results}\label{prelim}
\setcounter{equation}{0}

From now on, we will look at the germs of rigid hypersurfaces at the origin in $\CC^3$. They are given by equations of the form
$$
\Re z_3=F(z_1,\overline{z}_1,z_2,\overline{z}_2),\label{rigideq}
$$
where $F$ is a smooth real-valued function defined near the origin with $F(0)=0$. 

Consider the germ $M$ of a rigid hypersurface that is uniformly Levi degenerate of rank 1. Then the complex Hessian matrix of $F$ has rank 1 at every point, hence $F$ is a solution of the \emph{complex homogeneous Monge-Amp\`ere equation}
\begin{equation}
F_{1\bar1}F_{2\bar 2}-|F_{1\bar 2}|^2=0\label{cmplxmongeampere}
\end{equation}
(here and below subscripts $1$, $\bar 1$, $2$, $\bar 2$ indicate partial derivatives with respect to $z_1$, $\bar z_1$, $z_2$, $\bar z_2$, respectively). Clearly, we have either $F_{1\bar1}(0)\ne 0$ or $F_{2\bar2}(0)\ne 0$, so, up to rigid equivalence, we may additionally assume
\begin{equation}
\hbox{$F_{1\bar 1}>0$ everywhere.}\label{f11}
\end{equation}

Set
\begin{equation}
S:=\left(\frac{F_{1\bar 2}}{F_{1\bar 1}}\right)_1.\label{newdefs}
\end{equation}
The condition of 2-nondegeneracy of $M$ is then expressed as the nonvanishing of $S$ (see \cite{MP,Poc}). Thus, supposing that $M$ is 2-nondegenerate, we have
\begin{equation}
\hbox{$S\ne 0$ everywhere.}\label{cmplxsnonzero}
\end{equation}

Further, for real hypersurfaces in $\CC^3$ of the class ${\mathfrak C}_{2,1}$ the CR-flatness condition is equivalent to the simultaneous vanishing of two CR-invariants, called $J$ and $W$, introduced in \cite{Poc}. These invariants are given by explicit expressions in terms of a graphing function of the hypersurface. The formulas in \cite{Poc} for $J$ and $W$ are quite lengthy and hard to handle in general. However, these complicated formulas significantly simplify in the rigid case as the following proposition shows:

\begin{proposition}\label{complexexplinvprop}
For the germ $M$ of a rigid hypersurface in the class ${\mathfrak C}_{2,1}$ with $F$ satisfying {\rm (\ref{f11})}, we have 
\begin{equation}
\begin{array}{l}
\displaystyle J=\frac{5(S_1)^2}{18 S^2}\frac{F_{11\bar 1}}{F_{1\bar 1}}+\frac{1}{3}\frac{F_{11\bar 1}}{F_{1\bar 1}}\left(\frac{F_{11\bar 1}}{F_{1\bar 1}}\right)_1-\frac{S_1}{9 S}\frac{(F_{11\bar 1})^2}{(F_{1\bar 1})^2}+\frac{20(S_1)^3}{27 S^3}-\frac{5 S_1 S_{11}}{6 S^2}+\\
\vspace{-0.3cm}\\
\hspace{0.7cm}\displaystyle\frac{S_1}{6 S}\left(\frac{F_{11\bar 1}}{F_{1\bar 1}}\right)_1-\frac{S_{11}}{6 S}\frac{F_{11\bar 1}}{F_{1\bar 1}}-\frac{2}{27}\frac{(F_{11\bar 1})^3}{(F_{1\bar 1})^3}-\frac{1}{6}\left(\frac{F_{11\bar1}}{F_{1\bar 1}}\right)_{11}+\frac{S_{111}}{S_1},\\
\vspace{-0.1cm}\\
\displaystyle W=\frac{2\bar S_1}{3\bar S}+\frac{2S_1}{3S}+\frac{\bar S_{\bar 1}}{3\bar S^3}\left(\frac{F_{2\bar 1}}{F_{1\bar 1}}\bar S_1-\bar S_2\right)-\frac{1}{3\bar S^2}\left(\frac{F_{2\bar 1}}{F_{1\bar 1}}\bar S_{1\bar 1}-\bar S_{2\bar 1}\right).
\end{array}\label{cmplxexpljw}
\end{equation}
\end{proposition}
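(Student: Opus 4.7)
The plan is to specialize the general Pocchiola formulas for $J$ and $W$ from \cite{Poc} to the rigid case. In the unconstrained setting the graphing function depends on all five real variables on the hypersurface, but for a rigid hypersurface $\Re z_3=F(z_1,\bar z_1,z_2,\bar z_2)$ the function $F$ is independent of $\Im z_3$, so every derivative of $F$ that involves $z_3$ or $\bar z_3$ vanishes identically. This is the sole source of simplification, and it is purely computational: there is no further conceptual input beyond the definitions.

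First, I would write Pocchiola's full expressions for $J$ and $W$ on a single sheet, keeping visible the dependence on $F$ and on the auxiliary quantity $S$ defined in (\ref{newdefs}) (which is precisely the 2-nondegeneracy invariant from Pocchiola's Cartan-style reduction). Next, I would cross out every term that carries at least one factor $F_3$, $F_{\bar 3}$, $F_{13}$, $F_{1\bar 3}$, $F_{23}$, $F_{2\bar 3}$, or any higher mixed derivative involving the third slot, since all of these vanish in the rigid case. Pocchiola's dozens of summands collapse dramatically at this stage.

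Then I would regroup the surviving terms. For $J$, the remaining expression involves only $F_{1\bar 1}$, the logarithmic derivative $F_{11\bar 1}/F_{1\bar 1}$ and its $z_1$-derivatives, together with $S$, $S_1$, $S_{11}$, $S_{111}$; one checks by the chain and quotient rules that these organize precisely as the first equality in (\ref{cmplxexpljw}). For $W$, after the same pruning one is left with the complex conjugate structure involving $\bar S$, its derivatives $\bar S_1$, $\bar S_2$, $\bar S_{1\bar 1}$, $\bar S_{2\bar 1}$, and the ratio $F_{2\bar 1}/F_{1\bar 1}$, which reassemble into the second equality in (\ref{cmplxexpljw}). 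In several places I would use the Monge--Amp\`ere relation (\ref{cmplxmongeampere}) and (\ref{f11}) to justify the divisions by $F_{1\bar 1}$ and to eliminate $F_{2\bar 2}$ in favor of $|F_{1\bar 2}|^2/F_{1\bar 1}$; the nonvanishing (\ref{cmplxsnonzero}) legitimizes the divisions by $S$, $S_1$.

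The main obstacle is bookkeeping, not insight: Pocchiola's original formulas are long, and sign and chain-rule errors are easy to commit. To stay tractable I would introduce abbreviations such as $\sigma:=S_1/S$ and $\tau:=F_{11\bar 1}/F_{1\bar 1}$, reducing the identity to a polynomial identity in $\sigma,\tau$ and their $z_1$-derivatives. As a sanity check I would evaluate both sides on the model $\tilde M_0$ (whose $J$ and $W$ must vanish identically), and verify agreement on one further explicit rigid example of class $\mathfrak{C}_{2,1}$ for which $J$ or $W$ is nonzero, so as to rule out accidental cancellations that hide a sign error.
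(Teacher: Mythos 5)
Your proposal takes essentially the same route as the paper, which omits the computation and states only that the formulas follow ``by straightforward manipulation of formulas in \cite{Poc}''---i.e.\ exactly your plan of specializing Pocchiola's general expressions to a graphing function independent of the fifth (rigid) variable and regrouping in terms of $S$ and $F_{11\bar 1}/F_{1\bar 1}$. The one small inaccuracy is your appeal to (\ref{cmplxsnonzero}) to justify division by $S_1$: that condition only gives $S\ne 0$, not $S_1\ne 0$, though this affects the bookkeeping of the final term in $J$ rather than the overall method.
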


\noindent The proof of Proposition \ref{complexexplinvprop} goes by straightforward manipulation of formulas in \cite{Poc}, and we omit it.

Finding the germs of the graphs of all solutions of system (\ref{condsjw}) up to rigid equivalence is apparently very hard. In article \cite{I3} we made some initial steps towards this goal. Specifically, we discussed solutions having the property
\begin{equation}
S_1=0,\quad S_{\bar 1}=0.\label{s1111}
\end{equation}
Our motivation for introducing conditions (\ref{s1111}) comes from the tube case, where these conditions are equivalent to the equation $W=0$ (see \cite[Lemma 3.1]{I3}). At this stage, we do not know whether the same holds true in the rigid case as well, but it is clear from (\ref{cmplxexpljw}) that (\ref{s1111}) implies $W=0$.

Furthermore, as can be easily seen from (\ref{cmplxexpljw}), conditions (\ref{s1111}) lead to the following simplified expression for $J$:
\begin{equation}
\displaystyle J=\frac{1}{3}\frac{F_{11\bar 1}}{F_{1\bar 1}}\left(\frac{F_{11\bar 1}}{F_{1\bar 1}}\right)_1-\frac{2}{27}\frac{(F_{11\bar 1})^3}{(F_{1\bar 1})^3}-\frac{1}{6}\left(\frac{F_{11\bar1}}{F_{1\bar 1}}\right)_{11}.\label{reducedj}
\end{equation}
Formula (\ref{reducedj}) yields that under assumption (\ref{s1111}) the equation $J=0$ is equivalent to
\begin{equation}
9 F_{1111\bar 1}(F_{1\bar 1})^2-45 F_{111\bar 1}F_{11\bar 1} F_{1\bar 1}+40 (F_{11\bar 1})^3=0,\label{mplxmongeeq}
\end{equation}
which looks remarkably similar to the well-known \emph{Monge equation}. Recall that the classical single-variable Monge equation is
\begin{equation}
9f^{{\rm(V)}}(f'')^2-45f^{{\rm(IV)}}f'''f''+40(f''')^3=0\label{classmongeeq}
\end{equation}
and that it encodes all planar conics (see, e.g., \cite[pp.~51--52]{Lan}, \cite{Las}). In analogy with (\ref{classmongeeq}), we call (\ref{mplxmongeeq}) the \emph{complex Monge equation with respect to $z_1$}.

Thus, we arrive at a natural class of CR-flat rigid hypersurfaces in ${\mathfrak C}_{2,1}$ described by the system of partial differential equations
$$
\left\{\begin{array}{l}
\hbox{the complex Monge equation w.r.t. $z_1$ (\ref{mplxmongeeq})},\\
\vspace{-0.3cm}\\
\hbox{the complex Monge-Amp\`ere equation (\ref{cmplxmongeampere})},\\
\vspace{-0.3cm}\\
\hbox{equations (\ref{s1111})},
\end{array}\right.
$$
where conditions (\ref{f11}) and (\ref{cmplxsnonzero}) are satisfied. 

We will now recall \cite[Proposition 5.4]{I3}, which states that (\ref{mplxmongeeq}) can be integrated three times with respect to $z_1$. Note that an analogous fact holds for the classical Monge equation (see, e.g., \cite{I2}). 

\begin{proposition}\label{formfunctionf}
A function $F$ satisfying {\rm (\ref{f11})} is a solution of {\rm (\ref{mplxmongeeq})} if and only if
\begin{equation}
\begin{array}{l}
\displaystyle\frac{1}{(F_{1\bar 1})^{\frac{2}{3}}}=f(z_2,\bar z_2)|z_1|^4+g(z_2,\bar z_2)z_1^2\bar z_1+\overline{g(z_2,\bar z_2)}z_1\bar z_1^2+h(z_2,\bar z_2)|z_1|^2+\\
\vspace{-0.3cm}\\
\hspace{1.5cm}\displaystyle p(z_2,\bar z_2)z_1^2+\overline{p(z_2,\bar z_2)}\bar z_1^2+q(z_2,\bar z_2)z_1+\overline{q(z_2,\bar z_2)}\bar z_1+x(z_2,\bar z_2),
\end{array}\label{intthreetimes}
\end{equation}
where $f,g,h,p,q,x$ are smooth functions, with $f, h, x$ being real-valued.
\end{proposition}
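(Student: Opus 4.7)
The strategy is to \emph{linearize} the complex Monge equation (\ref{mplxmongeeq}) via the substitution $u := (F_{1\bar 1})^{-2/3}$, which by (\ref{f11}) is a well-defined positive real smooth function near the origin. Setting $G := F_{1\bar 1}$ and computing three $z_1$-derivatives of $u = G^{-2/3}$ yields, after collecting terms,
$$
u_{z_1 z_1 z_1} \;=\; -\,\frac{2}{27\, G^{11/3}}\Bigl[\,9 G_{111} G^2 \;-\; 45 G_{11} G_1 G \;+\; 40 G_1^3\,\Bigr].
$$
Because $G_1 = F_{11\bar 1}$, $G_{11} = F_{111\bar 1}$, $G_{111} = F_{1111\bar 1}$, the bracket is exactly the left-hand side of (\ref{mplxmongeeq}), and $G > 0$ by (\ref{f11}); hence $F$ solves the complex Monge equation if and only if $u_{z_1 z_1 z_1} \equiv 0$.

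Next, since $F$ is real, $G$ and $u$ are real-valued, and complex conjugation of $u_{z_1 z_1 z_1}=0$ also yields $u_{\bar z_1 \bar z_1 \bar z_1}=0$. A standard Wirtinger integration of the relation $\partial^3 u/\partial z_1^3 = 0$ (using that the kernel of $\partial/\partial z_1$ on smooth functions consists of expressions independent of $z_1$) shows that $u$ is a polynomial of degree at most $2$ in $z_1$ with coefficients smooth in $(\bar z_1, z_2, \bar z_2)$. Repeating the argument with $\bar z_1$ in place of $z_1$ gives
$$
u \;=\; \sum_{0 \le j, k \le 2} a_{jk}(z_2, \bar z_2)\, z_1^j \bar z_1^k.
$$
The reality condition $\bar u = u$ enforces $a_{jk} = \overline{a_{kj}}$, so $a_{22}, a_{11}, a_{00}$ are real-valued; renaming $a_{22} = f$, $a_{21} = g$, $a_{11} = h$, $a_{20} = p$, $a_{10} = q$, $a_{00} = x$ recovers precisely the form (\ref{intthreetimes}). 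Conversely, if $u$ has this form, then $u_{z_1 z_1 z_1} = 0$ by inspection, and the displayed identity above returns (\ref{mplxmongeeq}).

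The only real work is spotting the linearizing exponent $-2/3$ — in analogy with a classical trick for the ODE (\ref{classmongeeq}) — and verifying the displayed identity for $u_{z_1 z_1 z_1}$ by a short but careful computation; once that is done, everything else is formal. No regularity or convergence obstacle enters, since smoothness and positivity of $F_{1\bar 1}$ ensure that $u$ is smooth throughout the relevant neighbourhood of the origin, and the integration step is just the solvability of $\partial^3/\partial z_1^3$ and $\partial^3/\partial \bar z_1^3$ on smooth functions.
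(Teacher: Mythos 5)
Your proof is correct: the identity $u_{z_1z_1z_1}=-\tfrac{2}{27}(F_{1\bar 1})^{-11/3}\bigl[9F_{1111\bar 1}(F_{1\bar 1})^2-45F_{111\bar 1}F_{11\bar 1}F_{1\bar 1}+40(F_{11\bar 1})^3\bigr]$ for $u=(F_{1\bar 1})^{-2/3}$ checks out, and the threefold integration in $z_1$ (where the kernel of $\partial/\partial z_1$ consists of functions anti-holomorphic in $z_1$, hence genuinely functions of $\bar z_1$), combined with the conjugate equation and the reality of $u$, yields exactly the bidegree-$(2,2)$ form (\ref{intthreetimes}) with $a_{jk}=\overline{a_{kj}}$. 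This is essentially the same approach the paper points to: Proposition \ref{formfunctionf} is quoted from \cite{I3} precisely as the statement that (\ref{mplxmongeeq}) ``can be integrated three times with respect to $z_1$,'' in analogy with the classical linearization of the Monge equation (\ref{classmongeeq}).
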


In \cite{I3} we began investigating the simplest possible situation arising from Proposition \ref{formfunctionf} by assuming that in formula (\ref{intthreetimes}) one has
\begin{equation}
f=g=h=p=q=0.\label{specclass}
\end{equation}
This means that $F_{1\bar 1}=r(z_2,\bar z_2)$ or, equivalently,
\begin{equation}
F=r(z_2,\bar z_2)|z_1|^2+s(z_1,z_2,\bar z_2)+\overline{s(z_1,z_2,\bar z_2)}\label{specform1}
\end{equation}
for some functions $r$ and $s$ smooth near the origin, with $r>0$ everywhere and $\Re s(0)=0$. As explained in \cite{I3} and can be seen from what follows, exploring even this very special case is far from being trivial and leads to some interesting analysis. In \cite{I3} we introduced additional assumptions (shown in Remark \ref{speccaseprevpaper} below) and obtained a partial classification of the corresponding rigid hypersurfaces germs. In the present paper, we focus on functions of the form (\ref{specform1}) without any further constraints. Our goal is to determine $r$ and $s$ as explicitly as possible.

We will utilize the complex Monge-Amp\`ere equation (\ref{cmplxmongeampere}). Indeed, plugging (\ref{specform1}) in (\ref{cmplxmongeampere}) leads to
\begin{equation}
r(r_{2\overline{2}}|z_1|^2+s_{2\overline{2}}+\overline{s_{2\overline{2}}})-|r_2|^2|z_1|^2-|s_{1\overline{2}}|^2-r_2s_{1\overline{2}}z_1-r_{\overline 2}\overline{s_{1\overline{2}}}\bar z_1=0.\label{cmpxmapl}   
\end{equation}
We will now differentiate (\ref{cmpxmapl}). First, applying the operator $\partial^2/\partial z_1\partial\bar z_1$ to (\ref{cmpxmapl}) yields
\begin{equation}
rr_{2\bar 2}-|r_2|^2-|s_{11\bar 2}|^2=0.\label{eqforr}
\end{equation} 
Further, differentiating (\ref{eqforr}) with respect to $z_1$, we obtain
$$
s_{111\bar 2}\overline{s_{11\bar 2}}=0,
$$
which implies that $s_{111\bar 2}=0$, and therefore we have
$$
s_{\bar 2}=t_0(z_2,\bar z_2)z_1^2+u_0(z_2,\bar z_2)z_1+v_0(z_2,\bar z_2)
$$
for some smooth functions $t_0$, $u_0$, $v_0$ near the origin. Solving the $\bar\partial$-equations 
$$
t_{\bar 2}(z_2,\bar z_2)=t_0(z_2,\bar z_2),\quad u_{\bar 2}(z_2,\bar z_2)=u_0(z_2,\bar z_2),\quad v_{\bar 2}(z_2,\bar z_2)=v_0(z_2,\bar z_2)
$$
on a neighborhood of the origin, we obtain
$$
s=t(z_2,\bar z_2)z_1^2+u(z_2,\bar z_2)z_1+v(z_2,\bar z_2)+w(z_1,z_2),
$$
with $t$, $u$, $v$, $w$ being smooth functions defined near the origin and $\Re(v(0)+w(0))=0$. Since $w$ is in fact holomorphic and we study rigid hypersurfaces up to rigid equivalence, by absorbing $w$ into $z_3$ we may assume that $w=0$, so we have
\begin{equation}
s=t(z_2,\bar z_2)z_1^2+u(z_2,\bar z_2)z_1+v(z_2,\bar z_2),\label{formforsss}
\end{equation}
with $\Re v(0)=0$. Note that by condition (\ref{cmplxsnonzero}) we have $t_{\bar 2}\ne 0$ everywhere.

Next, applying the operator $\partial^2/\partial z_1^2$ to (\ref{cmpxmapl}), we obtain
$$
rs_{112\bar 2}-s_{111\bar 2}\overline{s_{1\bar 2}}-r_2s_{111\bar 2}z_1-2r_2s_{11\bar 2}=0,
$$
which, upon taking into account expression (\ref{formforsss}), leads to
\begin{equation}
rt_{2\bar 2}-2r_2t_{\bar 2}=0,\label{eqforttt}
\end{equation}
hence to
\begin{equation}
\frac{t_{2\bar 2}}{t_{\bar 2}}=2\frac{r_2}{r}.\label{integrable1}
\end{equation} 
Passing to logarithms and integrating (\ref{integrable1}) we arrive at the equation
\begin{equation}
t_{\bar 2}=w(\bar z_2)r^2,\label{formtbar}
\end{equation}
where $w(\bar z_2)$ is everywhere nonvanishing.  

\begin{lemma}\label{constant}
Up to rigid equivalence, one can assume that in {\rm (\ref{formtbar})} we have\linebreak $w\equiv1/4$.
\end{lemma}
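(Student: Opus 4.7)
My approach is to normalize $w$ by a suitable holomorphic reparametrization of the $z_2$-variable, realized as a rigid equivalence of the simplest admissible type. First, I would observe that since $r>0$ everywhere and $t_{\bar 2}\ne 0$ everywhere (as noted just before the lemma), equation (\ref{formtbar}) forces $w$ to be nonvanishing on some neighborhood of $0$; equivalently, the holomorphic function $W(z_2):=\overline{w(\bar z_2)}$ is nonvanishing there.

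Next, I would consider rigid equivalences of the form $\Phi(z_1,z_2,z_3)=(z_1,\varphi(z_2),z_3)$ with $\varphi$ a local biholomorphism of $\CC$ fixing $0$; these are admissible maps of type (\ref{rigideqmaps}) (with $a=1$, $f_1=z_1$, $f_3=0$). Writing $\psi:=\varphi^{-1}$, the new graphing function satisfies
$$
\tilde F(\zeta_1,\bar\zeta_1,\zeta_2,\bar\zeta_2)=F\bigl(\zeta_1,\bar\zeta_1,\psi(\zeta_2),\overline{\psi(\zeta_2)}\bigr),
$$
so the decomposition (\ref{specform1})--(\ref{formforsss}) is preserved, with $\tilde r=r\circ(\psi,\bar\psi)$ and $\tilde t=t\circ(\psi,\bar\psi)$. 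Applying the chain rule and (\ref{formtbar}) yields
$$
\tilde t_{\bar\zeta_2}(\zeta_2,\bar\zeta_2)=w\bigl(\overline{\psi(\zeta_2)}\bigr)\,\tilde r(\zeta_2,\bar\zeta_2)^2\cdot\overline{\psi'(\zeta_2)},
$$
so the analogue of (\ref{formtbar}) holds for $\tilde t$ with
$$
\tilde w(\bar\zeta_2)=w\bigl(\overline{\psi(\zeta_2)}\bigr)\,\overline{\psi'(\zeta_2)}.
$$

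Finally, to force $\tilde w\equiv 1/4$, I would set $\varphi(z_2):=4\int_0^{z_2}W(\sigma)\,d\sigma$, which is a local biholomorphism with $\varphi(0)=0$ and $\varphi'(0)=4W(0)\ne 0$. Then $\psi'(\zeta_2)=1/\bigl(4W(\psi(\zeta_2))\bigr)$, and since $\overline{W(\psi(\zeta_2))}=w\bigl(\overline{\psi(\zeta_2)}\bigr)$ by the definition of $W$, substituting into the transformation law above yields $\tilde w\equiv 1/4$ identically. There is no genuine obstacle here: the entire argument reduces to solving a first-order linear ODE for $\varphi$, and the nonvanishing of $W$ near $0$ is exactly what guarantees solvability and the biholomorphicity of $\varphi$ at the origin.
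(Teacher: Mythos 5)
Your argument is correct, but it normalizes $w$ by a different mechanism than the paper does. The paper's proof keeps $z_2$ fixed and rescales the fiber variable, applying the map $(z_1,z_2,z_3)\mapsto\bigl(z_1/(2\sqrt{\overline{w(\bar z_2)}}),z_2,z_3\bigr)$ (a branch of the square root exists near $0$ because $w$ is nonvanishing); this multiplies $r$ by $4|w|$ and $t$ by $4\overline{w}$, and the two factors combine to give $\tilde t_{\bar 2}=\tfrac14\tilde r^2$. You instead keep $z_1$ fixed and reparametrize the base variable $z_2$ by $\varphi$ with $\varphi'=4W$, so that $r$, $t$, $u$, $v$ are merely pulled back and the factor $\overline{\psi'}$ coming from the chain rule in $\bar\zeta_2$ absorbs $w$. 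Both are legitimate rigid equivalences of the form (\ref{rigideqmaps}) preserving the structure (\ref{specform1})--(\ref{formforsss}) and condition (\ref{f11}), and your chain-rule computation and the choice $\varphi(z_2)=4\int_0^{z_2}W(\sigma)\,d\sigma$ are accurate, with the nonvanishing of $w$ near $0$ (forced by $t_{\bar 2}\neq0$ and $r>0$) guaranteeing that $\varphi$ is a local biholomorphism. The trade-off is minor: the paper's route needs a square root of the nonvanishing holomorphic function $\overline{w(\bar z_2)}$ but leaves the $z_2$-coordinate untouched, while yours needs a primitive of $W$ and an inverse map $\psi=\varphi^{-1}$ but leaves $r$ unchanged up to composition, which is arguably cleaner when one later interprets $r$ via (\ref{explformforr}). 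Either normalization feeds equally well into the subsequent derivation of (\ref{eqforrr}).
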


\begin{proof} Let us perform the transformation
\begin{equation}
(z_1,z_2,z_3)\mapsto \left(\frac{z_1}{2\sqrt{\overline{w(\bar z_2)}}},z_2,z_3\right).\label{specmap}
\end{equation}
Clearly, (\ref{specmap}) is a map of the form (\ref{rigideqmaps}) for $n=3$ and preserves condition (\ref{f11}). Under this transformation the germ of the graph of $F$ transforms into the germ of the graph of the function
$$
\tilde F=\tilde r(z_2,\bar z_2)|z_1|^2+\tilde s(z_1,z_2,\bar z_2)+\overline{\tilde s(z_1,z_2,\bar z_2)},
$$
with 
$$
\tilde s=\tilde t(z_2,\bar z_2)z_1^2+\tilde u(z_2,\bar z_2)z_1+v(z_2,\bar z_2),
$$
where
\begin{equation}
\begin{array}{l}
\tilde r(z_2,\bar z_2):=4|w(\bar z_2)| r(z_2,\bar z_2),\\
\vspace{-0.3cm}\\
\tilde t(z_2,\bar z_2):=4\overline{w(\bar z_2)} t(z_2,\bar z_2),\\
\vspace{-0.3cm}\\
\tilde u(z_2,\bar z_2):=2\sqrt{\overline{w(\bar z_2)}\,}u(z_2,\bar z_2). 
\end{array}\label{rel1}
\end{equation}

Now, from (\ref{formtbar}) and (\ref{rel1}) we see
$$
\tilde t_{\bar 2}=\frac{1}{4}\tilde r^2
$$
as required.\end{proof}

By Lemma \ref{constant}, we may assume 
\begin{equation}
t_{\bar 2}=\frac{1}{4}r^2.\label{formtbar1}
\end{equation}
Plugging (\ref{formforsss}) and (\ref{formtbar1}) in (\ref{eqforr}), we obtain an equation for $r$:
\begin{equation}
rr_{2\bar 2}-|r_2|^2-\frac{1}{4}r^4=0.\label{eqforrr}
\end{equation}
It then follows that the function $R:=\ln r$ satisfies the equation
\begin{equation}
\Delta R=e^{2R}.\label{confmetrconsnegcurv}
\end{equation}
This shows that $r$ is a conformal metric of constant curvature -1 on a disk $U$ around the origin (see, e.g., \cite{KR}). Since $U$ is simply connected, by Liouville's theorem we have
\begin{equation}
r(z_2,\bar z_2)=\frac{2|\rho'(z_2)|}{1-|\rho(z_2)|^2},\label{explformforr}
\end{equation}
where $\rho$ is a holomorphic function on $U$ with nowhere vanishing derivative and values in the unit disk. In particular, $r$ is real-analytic.

\begin{remark}\label{speccaseprevpaper}
Formula (\ref{explformforr}) is the most explicit expression for $r$ that one can hope to obtain without making further assumptions. We note that for $r$ depending only on either $\Re z_2$ or $|z_2|^2$, it is possible to derive more precise formulas as shown in \cite[Theorems 5.5 and 5.9]{I3}. The dependence of $r$ on either $\Re z_2$ or $|z_2|^2$, as well as the conditions $u=0$, $v=0$, were the additional assumptions imposed in \cite{I3}, and it is under these assumptions the partial classification of the corresponding rigid hypersurfaces was produced.
\end{remark}

Now, for $r$ found in (\ref{explformforr}), we may solve $\bar\partial$-equation (\ref{formtbar1}) as follows:
$$ 
t=\int \frac{r^2}{4}d\bar z_2+w(z_2),
$$
where the integral in the right-hand side stands for the term-by-term integration, with respect to $\bar z_2$, of the power series in $z_2$, $\bar z_2$ representing $r^2/4$ near the origin.  
%on a smaller disk, say $V:=U/2$, yields the function $t$:
%$$ 
%t(z_2,\bar z_2)=\frac{1}{2\pi i}\int_{V}\frac{r^2(\zeta_2,\bar \zeta_2)}{4(\zeta_2-z_2)}d\zeta_2\wedge d\bar \zeta_2+w(z_2).
%$$
As $w$ is holomorphic, by absorbing $z_1^2w(z_2)$ into $z_3$ we may assume that $w=0$, so we have 
\begin{equation} 
t=\int \frac{r^2}{4}d\bar z_2.\label{formfort}
\end{equation}
In particular, $t$ is real-analytic. 
%(cf.~\cite[remark at the end of the proof of Theorem 6.17 on p.~110]{GT}).

Now that we have found $r$ and $t$, in order to determine the function $F$ it remains to compute $u$ and $\Re v$ in formula (\ref{formforsss}). Plugging (\ref{formforsss}) into (\ref{cmpxmapl}), collecting the terms linear in $z_1$, and utilizing (\ref{formtbar1}), we obtain
\begin{equation}
ru_{2\bar 2}-2t_{\bar 2}\bar u_2-r_2u_{\bar 2}=0,\label{eqforuuu}
\end{equation}  
which yields
$$
\left(\frac{u_{\bar 2}}{r}\right)_2=\frac{1}{2}\bar u_2.
$$
Integrating we get
$$
u_{\bar 2}=r\left(\frac{1}{2}\bar u+w(\bar z_2)\right).
$$
By adding to $z_3$ the term $4z_1\overline{w(\bar z_2)}$ we may assume that $w=0$, thus $u$ satisfies 
\begin{equation}
u_{\bar 2}=\frac{r}{2}\bar u.\label{classnonlindbar}
\end{equation}
Every solution of (\ref{classnonlindbar}) is real-analytic (see \cite[pp.~143--144, Section 3.4]{V} and references therein, as well as \cite[Section 6.6]{M}).

Note that by the Cauchy-Pompeiu formula, on any disk $\widetilde U$ around the origin relatively compact in $U$, the function $u$ solves the integral equation
\begin{equation}
u(z_2,\bar z_2)=-\frac{1}{2\pi i}\int_{\widetilde U}\frac{r(\zeta,\bar\zeta)\overline{u(\zeta,\bar\zeta)}}{2(\zeta-z_2)}d\bar\zeta\wedge d\zeta+w(z_2),\label{intgeq}
\end{equation}
where the holomorphic function $w$ is given by
$$
w(z_2)=\frac{1}{2\pi i}\int_{\partial\widetilde U}\frac{u(\zeta,\bar \zeta)}{\zeta-z_2} d\zeta.
$$
Thus, $u$ is a solution of an integral equation of the form (\ref{intgeq}) for a suitable function $w$. Regarding the existence, regularity and uniqueness of solutions of such integral equations we refer the reader to \cite[Chapter III]{V} and, in several variables, to \cite[pp.~436--438]{NW}. Other representations of $u$ can be found in the references provided in \cite[pp.~143--144, Section 3.4]{V}.  

Now, we plug (\ref{formforsss}) into (\ref{cmpxmapl}) and collect the terms independent of $z_1$. Taking into account equation (\ref{classnonlindbar}), we obtain
\begin{equation}
(\Re v)_{2\bar 2}=\frac{r}{8}|u|^2.\label{eqforvvv}
\end{equation}
As $r$ and $u$ are real-analytic, we see
$$
(\Re v)_2=\int \frac{r}{8}|u|^2d\bar z_2+w_0(z_2),
$$
where, as before, the integral denotes the result of the term-by-term integration of the corresponding power series with respect to $\bar z_2$. Hence 
$$
\Re v=\int\left(\int \frac{r}{8}|u|^2d\bar z_2\right)dz_2+w(z_2)+\overline{w(z_2)},
$$
where the outer integral in the right-hand side stands for the term-by-term integration, with respect to $z_2$, of the power series in $z_2$, $\bar z_2$ representing the inner integral near the origin. Since $w$ is holomorphic, by absorbing $w(z_2)$ into $z_3$ we may assume that $w=0$, so we have
\begin{equation}
\Re v=\int\left(\int \frac{r}{8}|u|^2d\bar z_2\right)dz_2.\label{formforrev}
\end{equation}
In particular, $\Re v$ is real-analytic.

We arrive at the following result:

\begin{theorem}\label{main}
The germ of a rigid hypersurface in $\CC^3$ of the class ${\mathfrak C}_{2,1}$ with graphing function satisfying conditions {\rm (\ref{s1111}) and (\ref{specclass})} is rigidly equivalent to the germ of a rigid hypersurface with graphing function of the form {\rm (\ref{specform1})}, where $s$ is given by {\rm (\ref{formforsss})} and the functions $r$, $t$, $u$, $\Re v$ are determined from {\rm (\ref{explformforr}), (\ref{formfort}), (\ref{classnonlindbar}), (\ref{formforrev})}, respectively.

Conversely, the germ of a rigid hypersurface with graphing function of the form {\rm (\ref{specform1})}, where $s$ is given by {\rm (\ref{formforsss})} and the functions $r$, $t$, $u$, $\Re v$ are determined from {\rm (\ref{explformforr}), (\ref{formfort}), (\ref{classnonlindbar}), (\ref{formforrev})}, respectively, is of the class ${\mathfrak C}_{2,1}$ and the graphing function satisfies conditions {\rm (\ref{s1111}), (\ref{specclass})}.  
\end{theorem}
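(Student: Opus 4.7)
\medskip
\noindent\textbf{Proof proposal for Theorem \ref{main}.} The forward direction is essentially already carried out in the discussion preceding the statement, and my plan is to organize that derivation into a single chain and supplement it with the (previously unwritten) reverse verification. Starting from a germ $M\in{\mathfrak C}_{2,1}$ whose graphing function satisfies (\ref{s1111}) and (\ref{specclass}), I would first note that (\ref{specclass}) gives $F_{1\bar 1}=r(z_2,\bar z_2)$, so integrating twice in $z_1,\bar z_1$ yields the splitting (\ref{specform1}). Substituting into the Monge--Amp\`ere equation gives (\ref{cmpxmapl}); applying $\partial^2/\partial z_1\partial\bar z_1$ extracts (\ref{eqforr}), and a further $z_1$-derivative forces $s_{111\bar 2}=0$. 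Solving the three $\bar\partial$-equations in $z_2$ and absorbing the resulting holomorphic function into $z_3$ (a rigid equivalence of the form (\ref{rigideqmaps})) puts $s$ into the shape (\ref{formforsss}).

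Next I would differentiate (\ref{cmpxmapl}) twice in $z_1$ to obtain (\ref{eqforttt}), whose logarithmic form (\ref{integrable1}) integrates to (\ref{formtbar}) with a nonvanishing $w(\bar z_2)$; Lemma \ref{constant} then reduces this to $t_{\bar2}=r^2/4$. Substituting back into (\ref{eqforr}) produces (\ref{eqforrr}), equivalent to the constant curvature $-1$ equation (\ref{confmetrconsnegcurv}) for $R=\ln r$. Liouville's representation theorem for such metrics on a simply-connected disk gives (\ref{explformforr}). With $r$ in hand, term-by-term $\bar z_2$-integration of $r^2/4$ yields $t$, modulo a holomorphic term absorbed into $z_3$ (using that $z_1^2 w(z_2)$ is a holomorphic function of $z_1,z_2$), producing (\ref{formfort}). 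Collecting in (\ref{cmpxmapl}) the coefficients of $z_1^1$ gives (\ref{eqforuuu}); a single integration in $z_2$ plus absorption of $4 z_1\overline{w(\bar z_2)}$ into $z_3$ reduces it to (\ref{classnonlindbar}). The $z_1^0$-coefficient then gives (\ref{eqforvvv}), and iterated term-by-term integration together with one last absorption of a holomorphic term into $z_3$ delivers (\ref{formforrev}).

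For the converse, I would start from an $F$ built according to the recipe and verify each defining property by direct computation. Formula (\ref{explformforr}) gives $r>0$, hence (\ref{f11}). One then computes $F_{1\bar1}=r$, $F_{1\bar2}=r_{\bar2}z_1+2\overline{t_{\bar2}}\,\bar z_1+\overline{u_{\bar2}}$, and $F_{2\bar2}=r_{2\bar2}|z_1|^2+2s_{2\bar2}$; using (\ref{formtbar1}), (\ref{classnonlindbar}), (\ref{eqforvvv}) and (\ref{eqforr}) the identity $F_{1\bar1}F_{2\bar2}=|F_{1\bar2}|^2$ drops out cleanly, so (\ref{cmplxmongeampere}) holds and the hypersurface is uniformly rank-$1$ Levi degenerate. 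The quantity $S$ in (\ref{newdefs}) equals $2t_{\bar2}/r=r/2$ by (\ref{formtbar1}), which is real, nonvanishing, and depends only on $(z_2,\bar z_2)$; this simultaneously yields 2-nondegeneracy (hence $M\in{\mathfrak C}_{2,1}$) and conditions (\ref{s1111}). Since $F_{1\bar1}=r(z_2,\bar z_2)$, condition (\ref{specclass}) is manifest.

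The main obstacle I foresee is not analytic but bookkeeping: one must keep careful track of the several rigid equivalences (the rescaling (\ref{specmap}) and the four successive absorptions of holomorphic terms into $z_3$) and verify that each is indeed of the form (\ref{rigideqmaps}) preserving (\ref{f11}), so that the normalized $F$ is rigidly equivalent to the original. A secondary subtlety is ensuring that, after restricting to a sufficiently small disk $U$ around the origin, all term-by-term $\bar\partial$-integrations are justified and the solutions of (\ref{classnonlindbar}) exist with the required real-analytic regularity; for the latter I would invoke the references to \cite{V} and \cite{M} already cited for (\ref{classnonlindbar}).
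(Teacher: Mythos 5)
Your proposal is correct and follows essentially the same route as the paper: the forward direction is exactly the derivation carried out in the text preceding the theorem, and the converse is the same direct verification (the Monge--Amp\`ere equation via (\ref{cmpxmapl}) together with $F_{1\bar 1}=r>0$ giving Levi rank 1, and $S=2t_{\bar 2}/r=r/2$ nonvanishing and independent of $z_1,\bar z_1$ giving 2-nondegeneracy and (\ref{s1111}), with (\ref{specclass}) immediate). Only two cosmetic slips worth fixing: the intermediate formulas should read $F_{1\bar 2}=r_{\bar 2}\bar z_1+2t_{\bar 2}z_1+u_{\bar 2}$ and $F_{2\bar 2}=r_{2\bar 2}|z_1|^2+s_{2\bar 2}+\overline{s_{2\bar 2}}$, and you should state explicitly (as the paper does) that every function of the form (\ref{explformforr}) solves (\ref{eqforrr}), which is the easy calculation underpinning your use of (\ref{eqforr}) in the converse.
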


\begin{proof} We only need to prove the converse implication. First, an easy calculation shows that every function that arises in the right-hand side of (\ref{explformforr}) is a solution of (\ref{eqforrr}). Next, by plugging (\ref{formforsss}) into (\ref{cmpxmapl}) we see that conditions (\ref{eqforrr}), (\ref{formfort}), (\ref{classnonlindbar}), (\ref{formforrev}) guarantee that equation (\ref{cmpxmapl}) is satisfied. The latter equation is the complex Monge-Amp\`ere equation in the case at hand. Furthermore, $F_{1\bar 1}=r$, which is positive by (\ref{explformforr}), and therefore (\ref{f11}) holds. We have thus shown that the graph of $F$ has Levi form of rank 1 everywhere. 

Next, from (\ref{newdefs}), (\ref{formforsss}) we have
\begin{equation}
S=\frac{2t_{\bar 2}}{r},\label{formforSSS}
\end{equation}
and (\ref{formfort}) yields
$$
t_{\bar 2}=\frac{r^2}{4}\ne 0\,\,\hbox{everywhere}.
$$
Therefore, by (\ref{cmplxsnonzero}) the graph of $F$ is 2-nondege\-nerate, hence lies in the class ${\mathfrak C}_{2,1}$. It also follows from (\ref{formforSSS}) that conditions (\ref{s1111}) are satisfied. Finally, (\ref{specclass}) trivially holds, which concludes the proof.\end{proof}

We have an immediate consequence:

\begin{corollary}\label{analyticity}
The germ of a rigid hypersurface in $\CC^3$ of the class ${\mathfrak C}_{2,1}$ satisfying conditions {\rm (\ref{s1111}) and (\ref{specclass})} is real-analytic.
\end{corollary}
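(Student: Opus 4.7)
The plan is to read off the analyticity directly from Theorem \ref{main}, which presents any such germ, up to rigid equivalence, via the four building blocks $r$, $t$, $u$, $\Re v$. My first step is to verify that each of these four functions is real-analytic on a neighborhood of the origin in the $z_2$-plane. For $r$, this is immediate from the Liouville-type representation (\ref{explformforr}), since $\rho$ is holomorphic on the disk $U$ with nowhere vanishing derivative and values in the unit disk, so the expression is a rational function of holomorphic and antiholomorphic quantities. For $t$, real-analyticity follows from formula (\ref{formfort}) as a termwise $\bar z_2$-antiderivative of $r^2/4$, which is real-analytic and has a convergent power series about $0$. The same applies to $\Re v$ via (\ref{formforrev}), after noting that $r|u|^2$ is real-analytic. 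Finally, real-analyticity of $u$, the most delicate case, follows from the quoted regularity theory for the nonlinear $\bar\partial$-equation (\ref{classnonlindbar}): this is already invoked in the text (see \cite[Section 3.4]{V} and \cite[Section 6.6]{M}), and is the one ingredient I would not attempt to reprove from scratch.

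Next, I would assemble these into the conclusion for the graphing function of the model germ. From (\ref{specform1}) and (\ref{formforsss}) we have
\begin{equation*}
F(z_1,\bar z_1,z_2,\bar z_2)=r|z_1|^2+t\,z_1^2+\bar t\,\bar z_1^2+u\,z_1+\bar u\,\bar z_1+2\Re v,
\end{equation*}
a polynomial of bidegree $(2,2)$ in $(z_1,\bar z_1)$ whose coefficients are real-analytic functions of $(z_2,\bar z_2)$. Hence $F$ is real-analytic in its four real arguments near the origin, so the model hypersurface germ produced by Theorem \ref{main} is real-analytic.

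The last step is to transfer analyticity back along the rigid equivalence. A rigid equivalence (\ref{rigideqmaps}) is holomorphic near $0$ and nondegenerate there, so by the holomorphic inverse function theorem it is biholomorphic on a neighborhood of $0$; both the map and its inverse are in particular real-analytic. Since a defining function of a rigid hypersurface pulls back under such a map to a defining function whose graph is obtained by substituting real-analytic expressions, real-analyticity is preserved in both directions. Combining this with the analyticity of the model germ yields that the original germ is real-analytic. I do not expect any genuine obstacle here; the only nontrivial input is the analytic regularity for (\ref{classnonlindbar}), which the paper has already cited and uses as a black box.
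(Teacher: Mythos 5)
Your proposal is correct and follows essentially the same route as the paper: Theorem \ref{main} reduces the germ, up to rigid equivalence, to the model with graphing function built from $r$, $t$, $u$, $\Re v$, each of which the text has already shown to be real-analytic (the only nonelementary input being the regularity theory for (\ref{classnonlindbar})), and analyticity transfers back through the biholomorphic rigid equivalence. The paper treats the corollary as an immediate consequence without spelling out these steps, so your write-up is just a more explicit version of the intended argument.
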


\begin{remark}\label{remanalyticity}
It would be interesting to see whether the real-analyticity result of Corollary \ref{analyticity} holds true for all CR-flat rigid hypersurfaces in $\CC^3$ of the class 
${\mathfrak C}_{2,1}$, regardless of assumptions (\ref{s1111}), (\ref{specclass}). Note that in general a CR-flat manifold does not have to be real-analytic. For example, it is easy to construct, for any $n\ge 2$, an example of a hypersurface in $\CC^n$ that is CR-equivalent to an open subset of $S^{2n-1}\subset\CC^n$ and $C^{\infty}$-smooth but not real-analytic (see, e.g., \cite[Remark 3.3]{I1}). In the Levi nondegenerate case the tubularity condition forces real-analyticity (see \cite[Proposition 3.1]{I1}) but it is unknown whether the rigidity condition is powerful enough for that. We stress that the work \cite{S, ES} for rigid Levi nondegenerate hypersurfaces in $\CC^2$ assumes real-analyticity as the techniques of the proofs rely on power series representations.
\end{remark}

\end{document}